\renewcommand\paragraph{\@startsection{paragraph}{4}{\z@}
                                    {2ex \@plus.5ex \@minus.2ex}
                                    {-1em}
                                    {\normalfont\normalsize\bfseries}}
\renewcommand\subparagraph{\@startsection{subparagraph}{5}{\parindent}
                                       {2ex \@plus.5ex \@minus .2ex}
                                       {-1em}
                                      {\normalfont\normalsize\bfseries}}
\newlength{\BiblioSpacing}
\renewenvironment{thebibliography}[1]{
\begin{oldthebibliography}{#1}
\setlength{\parskip}{\BiblioSpacing}
\setlength{\itemsep}{\BiblioSpacing}
}
{
\end{oldthebibliography}
}
\def\abstractname{Abstract}
\def\abstract{\begin{adjustwidth}{1cm}{1cm} \par \footnotesize \noindent {\bf \abstractname:} 
\def\endabstract{ \end{adjustwidth} \smallskip }}
\newtheorem{theorem}{Theorem}[section]}
\newtheorem{defi}[theorem]{Definition}}
\newtheorem{lemma}[theorem]{Lemma}}
\newtheorem{rem}[theorem]{Remark}}
\numberwithin{equation}{section}
\title{\Large\bf Upper and Lower Solution Method for Regular Discrete Second-Order Single-Variable BVPs}
\author{\sc S. Bandyopadhyay, K. Byassee, and C. Lynch}
\date{}
\begin{document}
\maketitle

\begin{abstract}
This paper investigates the existence of positive solutions for regular discrete second-order single-variable boundary value problems with mixed boundary conditions, including a nonhomogeneous Dirichlet boundary condition, of the form:
\begin{equation*}
u^{\Delta \Delta}(t-1)+h(t,\ u(t),\ u^{\Delta}(t-1))=0 \mbox{ for }t\in[1,\ T+1];~u^{\Delta}(0)=0;~u(T+2)=g(T+2)    
\end{equation*}
where h is continuous on $[1, T + 1] \times \mathbb{R}^2$ and $g: [0, T + 2] \to  \mathbb{R}^+$ is continuous. Using the concept of upper and lower solutions, we establish conditions under which the boundary value problem admits at least one positive solution. Our approach involves constructing an auxiliary problem with a modified nonlinearity and applying Brouwer Fixed Point Theorem to a carefully defined solution operator. We prove that any solution to this auxiliary problem that remains within the bounds of the upper and lower solutions is equivalent to a solution of the original problem. 
\end{abstract}

\begin{keywords}
    Discrete time scale; Difference equations; Second order; Boundary value problem; Brouwer fixed point theorem; Upper and lower solution
\end{keywords}

\begin{MSC}
39A27
\end{MSC}

\section{Introduction}

Discrete boundary value problems (BVPs) represent a fundamental area of research at the intersection of difference equations and mathematical modeling, with wide-ranging applications across disciplines. Although continuous BVPs have been extensively studied for centuries, their discrete counterparts have gained significant attention in recent decades due to their natural alignment with numerical methods and digital computation. This paper investigates positive solutions for a class of regular discrete second-order boundary value problems with mixed boundary conditions, specifically addressing non-homogeneous Dirichlet boundary conditions.

The study of discrete second-order boundary value problems is motivated by both theoretical interest and practical applications. From a theoretical perspective, these problems present unique challenges that do not manifest in continuous systems, including phenomena related to the discrete topology of the underlying domain. Practically, such boundary value problems arise naturally in the mathematical modeling of various physical and engineering systems, including mechanical vibrations, heat transfer across discrete media, population dynamics with seasonal variations, and quantum mechanics on lattice structures.

The specific boundary value problem we consider takes the form:

$$u^{\Delta\Delta}(t-1)+h(t, u(t), u^\Delta(t-1)) = 0 \text{ for } t \in [1, T+1]; u^\Delta(0) = 0; u(T+2) = g(T+2)$$

where $h$ is continuous on $[1, T+1] \times \mathbb{R}^2$ and $g: [0, T+2] \to \mathbb{R}^+$ is continuous. Here, $u^\Delta$ is the first order forward difference of $u$, $u^{\Delta\Delta}$ is the second order forward difference of $u$ (for specific definition see \ref{def:second-deriv}) and the Neumann-type condition $u^\Delta(0) = 0$ represents a zero-slope requirement at the left boundary, while the Dirichlet condition $u(T+2) = g(T+2)$ prescribes a specific value at the right boundary. This mixed boundary condition structure appears in various physical contexts, such as heat flow problems with insulation at one end and a fixed temperature at the other.

Recent work by Kunkel et al.~\cite{Ku1, KuThesis, KuLa} explored similar boundary value problems but focused primarily on homogeneous boundary conditions. Our approach extends their methodology by incorporating non-homogeneous boundary conditions while employing the powerful machinery of upper and lower solutions. This technique has proven effective in both continuous and discrete settings, offering a constructive approach to demonstrating existence results.

The well known method of upper and lower solutions has been adapted to the discrete setting by several researchers, including Agarwal~\cite{AgOR1} and Zeidler~\cite{Ze}. Our contribution builds upon this foundation by establishing conditions under which the given boundary value problem admits at least one positive solution. The positivity requirement is particularly relevant in applications where the unknown function represents physical quantities that cannot meaningfully take negative values, such as population densities, concentrations, or energy levels.

The structure of this paper is as follows: Section 2 provides essential preliminaries from time scale calculus, establishing the mathematical framework for our analysis. Section 3 formally introduces the problem, defines the concepts of upper and lower solutions, and states our main existence theorem. Section 4 contains the proof of the main result, employing a carefully constructed auxiliary problem and applying Brouwer's Fixed Point Theorem to a solution operator. We conclude with a discussion of potential extensions and applications in Section 5.

Our results not only contribute to the theoretical understanding of discrete boundary value problems but also provide practical insights for numerical approximations of continuous problems and for directly modeling phenomena that are inherently discrete in nature. The constructive approach developed here may further serve as a foundation for computational algorithms aimed at approximating solutions to these problems.

\section{Preliminaries}

Prior to stating our problem, we will introduce some basic definitions from time scale calculus. We will use these definitions throughout the remainder of the paper. See~\cite{BoPe} for further details.

\begin{defi}
Let \(T\in\mathbb{N}\) be fixed. Define the discrete intervals
$$\mathbb{T} := [0,\ T+2] = \{0,\ 2,\ \dots,\ T,\ T+2\},$$
and
$$\mathbb{T}^{\circ} := [1,\ T+1] = \{1,\ 2,\ \dots,\ T,\ T+1\}.$$
\end{defi}

\begin{rem}
    Observe that, $\mathbb{T}^{\circ}= \mathbb{T} \setminus \{0,T+2\} $
\end{rem}

\begin{defi}
\label{def:second-deriv}
For the function $u : \mathbb{T} \rightarrow \mathbb{R},$ define the delta derivative, $u^{\Delta},$ by
$$u^{\Delta} ({i}) := \frac{u({i+1})-u({i})}{{i+1}-{i}}=u({i+1})-u({i}), \quad i \in \mathbb{T}^{\circ} \cup \{{0}\}.$$
We make note that $u^{\Delta \Delta}(t_{i}) = \left( u^{\Delta} \right)^{\Delta} (t_{i}).$
\end{defi}

\section{Problem}

We consider the following difference equation:
\begin{equation}
    \label{Problem}
    \begin{cases}
        u^{\Delta \Delta}(t-1)+h(t,\ u(t),\ u^{\Delta}(t-1))=0\text{,\quad}t\in[1,\ T+1]\\
        u^{\Delta}(0)=0\\
        u(T+2)=g(T+2)
    \end{cases}
\end{equation}
where
\begin{itemize}
    \item[{\bf(A1)}]\(h\) is continuous on \([1,\ T+1]\times\mathbb{R}^2\) i.e. $h(\cdot, x, y)$ is defined on $[1,T+1]$ for each $(x,y) \in \mathbb{R}^2$ and $h(t, \cdot, \cdot)$ is continuous on $\mathbb{R}^2$ for each $t \in [1, T+1]$.
    \item[{\bf(A2)}]\(h\) is non-increasing in its third variable.
    \item[{\bf(A3)}] \(g:[0,\ T+2] \to \mathbb{R}^+\) is continuous.
\end{itemize}

\begin{defi}
A function \(u\colon[0,\ T+2]\to\mathbb{R}\)  is said to be a solution of \eqref{Problem} if and only if $u$ satisfies the difference equation \eqref{Problem} along with the boundary condition.
\end{defi}

\begin{defi}
A solution $u$ is said to be a positive solution of \eqref{Problem} if \(u(t)>0\) for \(t\in[1,\ T+1]\). 
\end{defi}

\begin{defi}
 \(\alpha\colon[0,\ T+2]\to\mathbb{R}\) is called a lower solution of \eqref{Problem} if
\begin{equation}
    \label{alpha}
    \begin{cases}
        \alpha^{\Delta\Delta}(t-1)+h(t,\ \alpha(t),\ \alpha^{\Delta}(t-1))\geq0\text{, for }t\in[1,\ T+1]\\
        \alpha^{\Delta}(0)\geq0\\
        \alpha(T+2)\leq g(T+2)
    \end{cases}
\end{equation}   
\end{defi} 

\begin{defi}
 \(\beta\colon[0,\ T+2]\to\mathbb{R}\) is called 
an upper solution of \eqref{Problem} if
\begin{equation}
    \label{beta}
    \begin{cases}
        \beta^{\Delta\Delta}(t-1) + h(t,\ \beta(t),\ \beta^{\Delta}(t-1))\leq0\text{, for }t\in[1,\ T+1]\\
        \beta^{\Delta}(0)\leq0\\
        \beta(T+2)\geq g(T+2)
    \end{cases}
\end{equation}   
\end{defi} 

Our Goal is to prove existence of positive solutions for \eqref{Problem}, in particular, we will prove the following Theorem:

\begin{theorem}
\label{Theorem}
 Let \(\alpha\) and \(\beta\) be a lower and an upper
function of \eqref{Problem} and \(\alpha\leq\beta\) on \([1,\ T+1]\) and $h, g$ satisfy the assumptions {\bf(A1)}-{\bf(A3)}. Then \eqref{Problem} has a solution \(u(t)\) satisfying:
\begin{equation}
    \label{upper_and_lower}
    \alpha(t)\leq u(t)\leq\beta(t)\text{,\quad}t\in[0,\ T+2]
\end{equation}   
\end{theorem}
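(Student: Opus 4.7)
The plan is to implement the classical upper--lower solution scheme in the finite discrete setting: modify $h$ outside the barrier strip $\alpha\le x\le\beta$ to obtain a bounded continuous nonlinearity $\tilde h$, solve the modified problem via Brouwer's fixed-point theorem on $\mathbb{R}^{T+3}$, and then prove that every solution of the modified problem must remain inside the strip, whereupon the modification is invisible and the solution solves \eqref{Problem}.

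\textbf{Truncation and fixed-point formulation.} Let $\gamma(t,x):=\max\{\alpha(t),\min\{x,\beta(t)\}\}$ be the projection onto the strip, and define
\[
\tilde h(t,x,y):=h\bigl(t,\gamma(t,x),\,y\bigr)+\frac{\gamma(t,x)-x}{1+\bigl(\gamma(t,x)-x\bigr)^{2}}.
\]
The penalty term is continuous, bounded by $1/2$, zero on the strip, and --- crucially for the barrier step --- negative when $x>\beta(t)$ and positive when $x<\alpha(t)$. I would further precompose the third argument with a clipping to $[-M,M]$ for $M:=\max_{t\in\mathbb{T}^{\circ}}\bigl(\beta(t)-\alpha(t-1)\bigr)+\max_{t\in\mathbb{T}^{\circ}}\bigl(\beta(t-1)-\alpha(t)\bigr)$, so that $\tilde h$ is globally bounded on $[1,T+1]\times\mathbb{R}^{2}$; this clipping is transparent on the strip, since $|u^{\Delta}(t-1)|\le M$ holds automatically for $\alpha\le u\le\beta$. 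Summing $u^{\Delta\Delta}(t-1)+\tilde h(\cdots)=0$ twice and applying $u^{\Delta}(0)=0$ and $u(T+2)=g(T+2)$ yields the equivalent form
\[
u(t)=g(T+2)+\sum_{s=t}^{T+1}\sum_{r=1}^{s}\tilde h\bigl(r,\,u(r),\,u(r)-u(r-1)\bigr),\qquad t\in\mathbb{T},
\]
which defines a continuous self-map $F$ of $\mathbb{R}^{T+3}$ (identifying $u$ with its vector of values). Because $\tilde h$ is bounded, $F$ sends a sufficiently large closed ball into itself, and Brouwer's fixed-point theorem produces a fixed point $u$, a solution of the auxiliary problem.

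\textbf{Barrier inclusion (the main obstacle).} The decisive step is to show $\alpha(t)\le u(t)\le\beta(t)$ on $\mathbb{T}$. I would argue by contradiction on $w:=u-\beta$, supposing $\max_{\mathbb{T}}w>0$ and letting $t^{\star}$ be the largest index at which this maximum is attained. The Dirichlet condition gives $w(T+2)\le 0$, hence $t^{\star}\le T+1$. If $t^{\star}=0$, maximality forces $w^{\Delta}(0)<0$, contradicting $w^{\Delta}(0)=u^{\Delta}(0)-\beta^{\Delta}(0)=-\beta^{\Delta}(0)\ge 0$. Hence $t^{\star}\in\mathbb{T}^{\circ}$, so $w^{\Delta}(t^{\star}-1)\ge 0>w^{\Delta}(t^{\star})$ and therefore $w^{\Delta\Delta}(t^{\star}-1)<0$. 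At this $t^{\star}$ we have $u(t^{\star})>\beta(t^{\star})$, so $\gamma(t^{\star},u(t^{\star}))=\beta(t^{\star})$ and the penalty is strictly negative. Combining the auxiliary equation with the upper-solution inequality, and applying assumption \textbf{(A2)} to $u^{\Delta}(t^{\star}-1)\ge\beta^{\Delta}(t^{\star}-1)$, gives
\[
w^{\Delta\Delta}(t^{\star}-1)\ge\bigl[h(t^{\star},\beta(t^{\star}),\beta^{\Delta}(t^{\star}-1))-h(t^{\star},\beta(t^{\star}),u^{\Delta}(t^{\star}-1))\bigr]+\bigl|\text{penalty}\bigr|>0,
\]
the desired contradiction. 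The bound $u\ge\alpha$ follows by the symmetric argument on $\alpha-u$, where the penalty is now positive. Once $\alpha\le u\le\beta$, both truncations are inactive and $\tilde h(t,u(t),u^{\Delta}(t-1))=h(t,u(t),u^{\Delta}(t-1))$, so $u$ solves \eqref{Problem}.

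\textbf{Main difficulty.} The subtlety lies entirely in the barrier step: the sign of the penalty, the monotonicity in \textbf{(A2)}, and the handling of the left-boundary Neumann-type condition must be coordinated so that the discrete ``second-difference is non-positive at an interior maximum'' inequality is upgraded to a strict one. The weak discrete maximum principle is by itself insufficient; it is the strict sign of the penalty, together with the one-sided derivative comparison supplied by (A2), that closes the contradiction. The Nagumo-type complication familiar from the continuous theory is mild here because the domain is finite, which is precisely why a naive clipping of the third argument is enough to render $\tilde h$ globally bounded.
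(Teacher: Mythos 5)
Your proposal is correct and takes essentially the same route as the paper: modify $h$ by truncation plus a bounded, sign-definite penalty, obtain a solution of the modified problem from the double-summation fixed-point form via Brouwer, and rule out an interior maximum of $u-\beta$ (resp.\ minimum of $u-\alpha$) by combining the upper-solution inequality, assumption \textbf{(A2)}, and the strict sign of the penalty. The only notable difference is that you clip the third argument to $[-M,M]$ where the paper instead truncates $u(t-1)$ into $[\alpha(t-1),\beta(t-1)]$ via $\sigma$; your barrier computation should then be written with the clipped value, which still dominates $\beta^{\Delta}(t^{\star}-1)$ because $M\ge\max_{t}\bigl(\beta(t)-\alpha(t-1)\bigr)\ge\beta^{\Delta}(t^{\star}-1)$ (the second summand in your $M$ is nonnegative, e.g.\ $\beta(0)\ge\beta(1)\ge\alpha(1)$), so \textbf{(A2)} applies exactly as you state.
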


\section{Proof of Theorem \ref{Theorem}}

\subsection*{Step 1: Construction of Auxiliary Problem} 

In this step, we introduce the auxiliary problem corresponding to \eqref{Problem}, by defining the auxiliary function $\tilde{h}$ corresponding to the nonlinearity $h$. First, for \(t\in[1,\ T+1],\ x,\ z\in\mathbb{R}\), we define,
\begin{align}
    \label{sigma}
    \sigma(t,\ z) &=
    \begin{cases}
        \beta(t-1)\text{,} & z>\beta(t-1)\\
        z\text{,} & \alpha(t-1)\leq z\leq\beta(t-1)\\
        \alpha(t-1)\text{,} & z<\alpha(t-1)
    \end{cases}\\[2ex] 
    \end{align}
   Next, we define, 
    
    \begin{align}
    \label{h_tilde}    
    \tilde{h}(t,\ x,\ x-z) &=
    \begin{cases}
        h(t,\ \beta(t),\ \beta(t)-\sigma(t,\ z))-\dfrac{x-\beta(t)}{x-\beta(t)+1}
        \text{,} & x>\beta(t)\\
        h(t,\ x,\ x-\sigma(t,\ z))\text{,} & \alpha(t)\leq x\leq\beta(t)\\
        h(t,\ \alpha(t),\ \alpha(t)-\sigma(t,\ z))+\dfrac{\alpha(t)-x}{\alpha(t)-x+1}
        \text{,} & x<\alpha(t)
    \end{cases}
\end{align}
where \(\sigma(t,\ z)\) is defined as above in \eqref{sigma}.
\begin{figure}[h]
\setlength{\unitlength}{.6cm}
\begin{center}
\begin{tikzpicture}
    \draw[->] (-3,0) -- (3,0); 
    \draw[->] (0,-2) -- (0,2); 

    \draw[thick, domain=-3:3, smooth] plot (\x, {1.5*exp(-0.3*\x*\x)*sin(180*\x/2)});
    
    \draw[dashed] (-3,1) -- (3,1) node[right] {\small $\beta(t)$};
    \draw[dashed] (-3,-1) -- (3,-1) node[right] {\small $\alpha(t)$};

    \node[right] at (2,.5) {\small $h(t)$};

\end{tikzpicture}
\end{center}
\end{figure}
\begin{figure}[h]
\setlength{\unitlength}{.6cm}
\begin{center}
\begin{tikzpicture}
    \draw[->] (-3,0) -- (3,0); 
    \draw[->] (0,-2) -- (0,2); 

    \draw[dashed] (-3,1) -- (3,1) node[right] {\small $\beta(t)$};
    \draw[dashed] (-3,-1) -- (3,-1) node[right] {\small $\alpha(t)$};

    \draw[thick, domain=-3:3, smooth, samples=100] 
        plot (\x, {max(-1, min(1.5*exp(-0.3*\x*\x)*sin(180*\x/2), 1))});
    
    \node[right] at (2,0.5) {\small $\Tilde{h}(t)$};

\end{tikzpicture}
\end{center}
\end{figure}

Observe that, \(\tilde{h}\) is continuous at \(x=\alpha(t)\) and \(x=\beta(t)\). Indeed,
\begin{align*}
    &\lim_{x\to\beta(t)}\tilde{h}(t,\ \beta(t),\ \beta(t)-\sigma(t,\ z))
    -\frac{x-\beta(t)}{x-\beta(t)+1}=h(t,\ x,\ x-\sigma(t,\ z))\\
    &\text{and}\\
    &\lim_{x\to\alpha(t)}\tilde{h}(t,\ \alpha(t),\ \alpha(t)-\sigma(t,\ z))
    +\frac{\alpha(t)-x}{\alpha(t)-x+1}=h(t,\ x,\ x-\sigma(t,\ z))\text{.}
\end{align*}
Therefore, we can conclude that, $\tilde{h}(t)$ is continuous in the discrete time scale \(\mathbb{T}^0=[1,\ T+1]\). Now, since $\mathbb{T}^0$ is closed and bounded, hence, we can apply extreme value theorem on $\tilde{h}$ to conclude that, $\tilde{h}$ attains its extrema in $\mathbb{T}^0$, i.e. there exists an \(M>0\) such that 
\begin{equation}
    \label{extreme_value}
    \left|\tilde{h}(t,\ x,\ y)\right|\leq M\text{ for }t\in[1,\ T+1],\ (x,\ y)\in\mathbb{R}\text{.}
\end{equation}

Finally, we define the following auxiliary problem corresponding to \eqref{Problem} using the function $\tilde{h}$ as described above:
\begin{equation}
    \label{regular_problem}
    \begin{cases}
        u^{\Delta\Delta}(t-1)+\tilde{h}(t,\ u(t),\ u^{\Delta}(t-1))=0\\
        u^{\Delta}(0)=0\\
        u(T+2)=g(T+2)
    \end{cases}
\end{equation}

\begin{rem}
The solution $u(t);~\alpha(t) \le u(t) \le \beta(t)$ to the auxiliary problem \eqref{regular_problem} is equivalent to the solution $u(t)$ of the given problem \eqref{Problem}, since $\tilde{h}\equiv h$ when $\alpha (t) \le t \le \beta (t)$.
\end{rem}

\subsection*{Step 2: Construction of solution operator}

In this step, we define the corresponding solution operator for the auxiliary problem \eqref{regular_problem} such that a fixed point of the solution operator turns out to be a solution of \eqref{regular_problem}. To define the solution operator, first we define the underlying function space and the appropriate norm as follows:

\begin{defi}
 Define the space \(E := \{u\colon[0,\ T+2]\to\mathbb{R},\ u^{\Delta}(0)=0,\ u(T+2)=cg(T+2)\}\) with the associated distance function \[\|u\|:=\max\{u(t)\colon t\in[1,\ T+1]\}\] where $c$ is an arbitrary positive constant.  
\end{defi}

\begin{lemma}
$\langle E, \|u\|\rangle$ is a Banach space.
\end{lemma}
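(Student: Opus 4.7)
The plan is to exploit the finite-dimensionality of the underlying space. Since $[0,T+2]$ is a finite discrete set with $T+3$ points, any function $u\colon [0,T+2]\to\mathbb{R}$ is determined by the tuple $(u(0),u(1),\ldots,u(T+2))$, so the collection of all such functions is canonically isomorphic to $\mathbb{R}^{T+3}$. The two boundary constraints $u^{\Delta}(0)=0$ (i.e.\ $u(1)=u(0)$) and $u(T+2)=cg(T+2)$ cut out an affine subspace of codimension~$2$.

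First I would address the fact that $E$ is genuinely an affine subspace when $cg(T+2)\neq 0$. I would fix a base point $u_0\in E$, for instance the function constantly equal to $cg(T+2)$, and pass to the linear subspace $E_0:=\{u-u_0:u\in E\}\subset\mathbb{R}^{T+3}$, which is a vector space of dimension $T+1$. The map $u\mapsto u-u_0$ is an isometry from $E$ onto $E_0$, so completeness of $E$ reduces to that of $E_0$. In parallel, I would interpret the stated distance function as $\|u\|:=\max\{|u(t)|:t\in[1,T+1]\}$, which is the only reading under which it can satisfy the norm axioms.

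Next I would verify the norm axioms on $E_0$. Absolute homogeneity and the triangle inequality follow at once from the corresponding properties of $|\cdot|$ and $\max$; the only subtlety is positive definiteness. If $\|u\|=0$ for some $u\in E_0$, then $u(t)=0$ for every $t\in[1,T+1]$, and the linearized constraints $u(1)=u(0)$ and $u(T+2)=0$ force $u(0)=0$ and $u(T+2)=0$ as well, so $u\equiv 0$ on $[0,T+2]$.

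Finally, completeness is automatic: $E_0$ is a finite-dimensional normed vector space, hence all norms on it are equivalent to the Euclidean norm pulled back from $\mathbb{R}^{T+3}$, in which Cauchy sequences converge componentwise; the limit remains in $E_0$ because $E_0$ is the kernel of the two continuous linear functionals $u\mapsto u(1)-u(0)$ and $u\mapsto u(T+2)$, hence closed. The main obstacle is really cosmetic: reconciling the definition of $E$ (affine rather than linear) and of $\|\cdot\|$ (written without absolute values) with the usual notion of a Banach space. Once these are interpreted correctly, the conclusion is the standard fact that every finite-dimensional normed space is complete.
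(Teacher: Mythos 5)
Your proof is correct, and it takes a genuinely more careful route than the paper's. The paper verifies the three Banach-space criteria directly but by assertion: it claims $E$ is a linear space ``since $\Delta$ is linear,'' argues $\|u\|\ge 0$ from $u(T+2)=cg(T+2)>0$, and deduces completeness of $E$ from completeness of $\mathbb{R}$, with no mention of the facts that the constraint $u(T+2)=cg(T+2)>0$ excludes the zero function (so $E$ is affine, not linear) and that $\max\{u(t):t\in[1,T+1]\}$ without absolute values can be zero or negative for nonzero $u$. Your argument repairs exactly these defects: you translate by a base point $u_0\equiv cg(T+2)$ to the linear subspace $E_0$ of codimension $2$ in $\mathbb{R}^{T+3}$, read the norm as $\max\{|u(t)|:t\in[1,T+1]\}$, prove positive definiteness by propagating $u(1)=u(0)$ and $u(T+2)=0$ from the boundary constraints, and get completeness from finite-dimensionality (equivalence of norms, closedness of $E_0$ as an intersection of kernels of continuous functionals). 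What your route buys is a statement that is literally true and directly usable in Step 3 of the paper, where Brouwer's theorem is applied to the closed ball $\overline{B(r)}$: your identification of $E$ with a translate of a finite-dimensional normed space makes the compactness and convexity of that ball transparent, whereas the paper's three-line verification leaves the reader to supply precisely the reinterpretations (affine translation, absolute values) that you made explicit. The only caveat is cosmetic: as literally stated the lemma cannot hold (a Banach space must contain $0$), so what you have proved --- and what the paper implicitly needs --- is that $E$ with $d(u,v)=\|u-v\|$ is a complete metric space isometric to the Banach space $E_0$.
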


\begin{proof}
{\bf Criterion 1: Show that $E$ is a linear space.}
This follows from the fact that $u \in E$ is a real-valued function and the forward difference operator $\Delta$ is linear.

{\bf Criterion 2: Show that $\|\cdot\|$ defined above is a norm.} Note that since $g$ is a positive function and $c$ is an arbitrary constant, $u(T+2) >0$.  Therefore, $\|u\| \ge 0$, for any $u \in E $ and $\|u\|=0$ if and only if $u \equiv 0$. Triangle inequality follows from the fact that maximum of sum of two functions is less than the sum of the maximums of the functions.

{\bf Criterion 3: To show $\langle E, \|u\|\rangle$ is complete.}
Observe that, $u \in E$ is a real-valued function and the completeness of $\mathbb{R}$ implies the completeness of $E$. 
Hence we can conclude that $\langle E, \|u\|\rangle$ is a Banach space.
\end{proof}

Now, we define the solution operator \(\mathcal{T}\colon E\to E\) by
\begin{equation}
    \label{t_operator}
    (\mathcal{T}u)(t)=cg(T+2)+\sum_{s=t}^{T+1}\left(\sum_{i=1}^s
    \tilde{h}(i,\ u(i),\ u^{\Delta}(i-1))\right),\ t\in[0,\ T+2]
\end{equation}

\subsection*{Step 3: Application of Brouwer's Fixed Point Theorem}

Continuity of $g$ and $\tilde{h}$ implies the continuity of the operator \(\mathcal{T}\). Moreover, continuity of $g$ over a closed set, \eqref{extreme_value} and \eqref{t_operator} imply that if
\[r\geq\sum_{s=1}^{T+1}sM\]
Then \(\mathcal{T}(\overline{B(r)})\subseteq\overline{B(r)}\), where
\(B(r)=\{u\in E\colon\|u\|<r\}\).
Therefore, by the Brouwer Fixed Point Theorem, \(\exists u\in\overline{B(r)}\) such that \(u=\mathcal{T}u\).

\subsection*{Step 4: \(u\) is a fixed point of \(\mathcal{T}\) iff \(u\) is a solution of \eqref{regular_problem}}

\subsubsection*{Forward Direction \((\Rightarrow)\)} 

Let us assume \(u=\mathcal{T}u\). Without loss of generality, we can assume $c=1$. Since $\mathcal{T}: E \to E$, hence \(u\in E\) and thus, satisfies the boundary conditions in \eqref{regular_problem}. 
Moreover,  for  $t\in[1,\ T+1]$, we have,
\begin{align*}
    u^{\Delta}(t-1) &= u(t)-u(t-1)\\
                  &= g(T+2)+\sum_{s=t}^{T+1}\sum_{i=1}^s\tilde{h}(i,\ u(i),\ u^{\Delta}(i-1))\\
                  &\quad \quad -\left(g(T+2)+\sum_{s=t-1}^{T+1}\sum_{i=1}^s\tilde{h}(i,\ u(i),\ u^{\Delta}(i-1))\right)\\
                  &= -\sum_{i=1}^{t-1}\tilde{h}(i,\ u(i),\ u^{\Delta}(i-1))\\
   \Rightarrow u^{\Delta \Delta}(t-1) &= u^{\Delta}(t)-u^{\Delta}(t-1)\\
                        &=-\tilde{h}(t,\ u(t),\ u^{\Delta}(t-1)).
\end{align*}
Therefore, we can conclude that $u$ is a solution of \eqref{regular_problem} if $u$ is a fixed point of the operator $\mathcal{T}$.

\subsubsection*{Backward Direction \((\Leftarrow)\)}

Next, let us assume \(u\) solves \eqref{regular_problem} with $c=1$. Then \(u\in E\) and \[ u^{\Delta \Delta}(t)=-\tilde{h}(t,\ u(t),\ u^{\Delta}(t-1)).\] We will show $u(t)= (\mathcal{T}u)(t)$, for $t \in [1, T+1]$. First, continuing via mathematical induction,  we can see, \begin{equation}
    \label{delta_u}
    u^{\Delta}(t)=-\sum_{i=1}^t\tilde{h}(i,\ u(i),\ u^{\Delta}(i-1))
\end{equation}

Indeed, for \(t=1\),
\begin{align*}
    u^{\Delta\Delta}(0) &= u^{\Delta}(1)-u^{\Delta}(0)\\
    -\tilde{h}(1,\ u(1),\ u^{\Delta}(0)) &= u^{\Delta}(1)-0\\
    u^{\Delta}(1) &= -\tilde{h}(1,\ u(1),\ u^{\Delta}(0));
\end{align*}
for $t =2$,
\begin{align*}
    u^{\Delta\Delta}(1) &= u^{\Delta}(2)-u^{\Delta}(1)\\
    \Rightarrow-\tilde{h}(1,\ u(1),\ u^{\Delta}(0)) &= u^{\Delta}(2)+\tilde{h}(1,\ u(1),\ u^{\Delta}(0))\\
    \Rightarrow u^{\Delta}(2) &= -(\tilde{h}(1,\ u(1),\ u^{\Delta}(0))+\tilde{h}(2,\ u(2),\ u^{\Delta}(1))); 
\end{align*}
for $t=3$,
\begin{align*}
    u^{\Delta\Delta}(2) &= u^{\Delta}(3)-u^{\Delta}(2)\\
    \Rightarrow -\tilde{h}(2,\ u(2),\ u^{\Delta}(1)) &= u^{\Delta}(3)
    +\tilde{h}(1,\ u(1),\ u^{\Delta}(0))+\tilde{h}(2,\ u(2),\ u^{\Delta}(1))\\
    \Rightarrow u^{\Delta}(3) &= -\left(\tilde{h}(1,\ u(1),\ u^{\Delta}(0))
    +\tilde{h}(2,\ u(2),\ u^{\Delta}(1))+\tilde{h}(3,\ u(3),\ u^{\Delta}(2))\right);
\end{align*} 
and so on.

Next, we again apply mathematical induction, starting backwards, to show, for $t=T+1$,
\begin{align*}
    u^{\Delta}(T+1) &= u(T+2)-u(T+1)\\
    \Rightarrow -\sum_{i=1}^{T+1}\tilde{h}(i,\ u(i),\ u^{\Delta}(i-1)) &= g(T+2)-u(T+1)\\
    \Rightarrow u(T+1) &= g(T+2)+\sum_{i=1}^{T+1}\tilde{h}(i,\ u(i),\ u^{\Delta}(i-1));
\end{align*}
for $t=T$,
\begin{align*}
    u^{\Delta}(T) &= u(T+1)-u(T)\\
    \Rightarrow -\sum_{i=1}^{T}\tilde{h}(i,\ u(i),\ u^{\Delta}(i-1)) &= g(T+2)
    +\left(\sum_{i=1}^{T+1}\tilde{h}(i,\ u(i),\ u^{\Delta}(i-1))\right)-u(T)\\
    \Rightarrow u(T) &= g(T+2)+\left(\sum_{i=1}^{T+1}\tilde{h}(i,\ u(i),\ u^{\Delta}(i-1))\right)\\
    & \quad \quad +\left(\sum_{i=1}^{T}\tilde{h}(i,\ u(i),\ u^{\Delta}(i-1))\right)\\
         &= g(T+2)+\sum_{s=T}^{T+1}\sum_{i=1}^s\tilde{h}(i,\ u(i),\ u^{\Delta}(i-1));
\end{align*}

and so on. Thus continuing on via induction and using the vacuous cases that \(\displaystyle\sum_i^0=0\) and
\(\displaystyle\sum_{T+2}^{T+1}=0\) we get,
\begin{equation}
    \label{u(t)}
    u(t)=g(T+2)+\sum_{s=t}^{T+1}\sum_{i=1}^{s}\tilde{h}(i,\ u(i),\ u^{\Delta}(i-1))
    \text{,\quad}t\in[0,\ T+2]
\end{equation}

\subsection*{Step 5:}

Finally, we show that, \(u\) satisfies
\[\alpha(t)\leq u(t)\leq\beta(t)\text{,\quad}t\in[0,\ T+2]\]
We will prove case \(u(t)\leq\beta(t)\) and the proof for the case $\alpha(t) \le u(t)$ can be done similarly.
Let us consider the function \(v(t)=u(t)-\beta(t)\). Let us assume that \(\max\{v(t)\colon t\in[0,\ T+2]\}=v(\ell)>0\). Boundary conditions described in \eqref{regular_problem} and \eqref{beta} imply that $\ell \notin \{0,T+2\}$. Therefore, \(\ell\in[1,\ T+1]\). Consequently, \(v(\ell+1)\leq v(\ell)\) and \(v(\ell)\geq v(\ell-1)\) which implies \(    v^{\Delta}(\ell)\leq 0\) and \(v^{\Delta}(\ell-1)\geq 0\).
Hence,
\begin{equation}
    \label{contradiction}
    v^{\Delta \Delta}(\ell-1)\leq 0 \Rightarrow u^{\Delta \Delta}(\ell-1)\leq\beta^{\Delta\Delta}(\ell-1).
\end{equation}

On the other hand, utilizing the definition of $\tilde{h}(t, x, x-z)$ for $t=\ell,~x=u(\ell),~z=u(\ell -1)$ and the fact \(h\) is non-increasing in its third variable (see {\bf (A2)}),
we obtain, 
\begin{align*}
    u^{\Delta\Delta}(\ell-1)-\beta^{\Delta\Delta}(\ell-1) &=
    -\tilde{h}(\ell,\ u(\ell),\ u^{\Delta}(\ell-1))-\beta^{\Delta\Delta}(\ell-1)\\
                                                     &= -h(\ell,\ \beta(\ell),\ \beta(\ell)-\sigma(\ell,\ u(\ell-1)))+\frac{v(\ell)}{v(\ell)+1}-\beta^{\Delta\Delta}(\ell-1)\\
                                                     &\geq -h(\ell,\ \beta(\ell),\ \beta^{\Delta}(\ell-1)+\frac{v(\ell)}{v(\ell)+1}-\beta^{\Delta\Delta}(\ell-1)\\
                                                     &\geq\frac{v(\ell)}{v(\ell)+1}\\
                                                     &> 0
\end{align*}
which contradicts \eqref{contradiction}. Therefore, \(u(t)\leq\beta(t)\) for \(t\in[0,\ T+2]\) and this completes the proof of Theorem \ref{Theorem}.

\medskip

\noindent \textbf{Shalmali Bandyopadhyay}\\  
The University of Tennessee at Martin\\
Martin, TN\\
E-mail: \texttt{sbandyo5@utm.edu}

\medskip

\noindent \textbf{Kyle Byassee}\\  
The University of Tennessee at Martin\\
Martin, TN\\
E-mail: \texttt{kylhbyas@ut.utm.edu}

\medskip

\noindent \textbf{Curt Lynch}\\  
The University of Tennessee at Martin\\
Martin, TN\\
E-mail: \texttt{curjlync@ut.utm.edu}

\end{document}